\titleformat{\subsection}[runin]
{\bfseries} {\thesubsection{.}}{0.15cm}{}[.]
\titleformat{\subsubsection}[runin]
{\em}{\thesubsubsection{.}}{0.15cm}{}[.]
\newtheorem{theorem}{Theorem}[section]
\newtheorem{lemma}[theorem]{Lemma}
\newtheorem{corollary}[theorem]{Corollary}
\theoremstyle{definition}
\numberwithin{equation}{section}
\numberwithin{figure}{section}
\def\ccal{\mathcal{C}}
\def\Fcal{\mathcal{F}}
\def\c{\mathbb{C}}
\def\b{\mathbb{B}}
\def\r{\mathbb{R}}
\def\igot{\mathfrak{i}}
\def\igot{\mathfrak{i}}
\renewcommand\imath{\igot}
\def\diam{\mathrm{diam}\,}
\def\dist{\mathrm{dist}}
\def\grad{\mathrm{grad}}
\def\length{\mathrm{length}}
\def\cd{\overline{D}}
\begin{document}

\fancyhead[LO]{Complete proper holomorphic embeddings into balls} 

\fancyhead[RE]{ B.\ Drinovec Drnov\v sek}

\fancyhead[RO,LE]{\thepage}

\thispagestyle{empty}

%% Title
\vspace*{1cm}
\begin{center}
%{\bf\LARGE Complete proper holomorphic embeddings from the unit ball to the unit ball}
%{\bf\LARGE Complete proper holomorphic embeddings into the unit ball}
{\bf\LARGE Complete proper holomorphic embeddings of strictly pseudoconvex domains into balls}

\vspace*{0.5cm}

%% Authors
{\large\bf Barbara Drinovec Drnov\v sek}
\end{center}

%% Addresses and finantial support
%\footnote[0]{\vspace*{-0.4cm}
%}
%% Abstract, keywords, and MSC

\vspace*{1cm}

\begin{quote}
{\small
\noindent {\bf Abstract}\hspace*{0.1cm}
We construct a complete proper holomorphic embedding from any strictly pseudoconvex domain with 
$\ccal^2$-boundary in $\c^n$ into
the unit ball of $\c^N$, for $N$ large enough,
thereby answering a question of Alarc\'on and Forstneri\v c \cite{AF}. 

\vspace*{0.1cm}

\noindent{\bf Keywords}\hspace*{0.1cm} complete map, proper holomorphic map, peak function.

\vspace*{0.1cm}

%\noindent{\bf Mathematics Subject Classification (2010)}\hspace*{0.1cm} 32B15, 32H02, 14H50, 53C42.

\noindent{\bf MSC (2010):}\hspace*{0.1cm} 32H35, 32H02, 32T40}
\end{quote}

%%%%%%%%%%
%%%%%%%%%%
%%%%%%%%%%
%%%%%%%%%%
%%%%%%%%%%
%%%%%%%%%%

\section{Introduction} 
\label{sec:intro}

The question of existence of complete bounded submanifolds in $\c^n$ was raised by Yang in 1977 \cite{Yo1,Yo2}, and even before, in 1965, Calabi 
conjectured the nonexistence of complete minimal surfaces in $\r^3$
with bounded projection into a straight line \cite{Ca},
which turned out to be false \cite{JX}.
These inspired many results in complex analysis and minimal surface theory. For a survey of the results and references,
see the introduction in \cite{AF} and the survey \cite{AF1}.
Most of the known results regarding Yang's question hold for complex curves,
including the first positive answer by Jones \cite{Jo},
whereas for higher dimensional submanifolds not much was known until recently:
Globevnik \cite{Gl} proved that for any $n$, $m$, $1\le n<m$,  there is a complete closed $n$-dimensional complex submanifold in 
the unit ball of $\c^m$, and therefore he completely answered Yang's question.
In his construction there is no control on the topology of the submanifolds.

Alarc\'on and Forstneri\v c \cite{AF} constructed a complete proper holomorphic immersion from any bordered Riemann surface 
into the unit ball in $\c^2$, and a complete proper holomorphic embedding into the unit ball in $\c^m$, $m\ge 3$.
They used the method of exposing boundary points of a complex curve in $\c^2$
\cite{FW} together with  the approximate solution to a Riemann-Hilbert boundary value problem. None of these is available in higher dimensions.
They asked if there is a complete proper holomorphic immersion/embedding from the unit ball in $\c^n$ into the unit ball
of a higher dimensional Euclidean space. The aim of this note is to give an affirmative answer to their question.

Let $\b_m$ denote the open unit ball in $\c^m$. An embedding $f\colon D\to \c^m$ from an open subset $D\subset \c^n$ is {\em complete}
if the induced Riemannian metric $f^*ds^2$ on $D$ obtained by pulling back the Euclidean metric $ds^2$ on $\c^m$ is a complete
metric on $D$.
The main result of this note is the following theorem:

\begin{theorem}
	\label{main_theorem}
	Let $D$ be a bounded strictly convex domain with $\ccal^2$-boundary in $\c^n$. There exists a positive integer $s$ with the 
	following property. For any positive integer $p$ and for any continuous map $h\colon \cd\to \b_p$, which is an injective holomorphic immersion in $D$,
	there exists a holomorphic map $f\colon D\to\c^{2s}$, such that the map $(f,h)\colon D\to \b_{2s+p}$ is
a	complete proper holomorphic embedding.
\end{theorem}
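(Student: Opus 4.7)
The plan is to construct $f$ as an infinite series of peak-function contributions attached to a dense discrete subset of $\partial D$, exploiting the strict convexity of $D$. At each $p\in\partial D$, a supporting real hyperplane provides a complex linear functional $L_p$ with $\operatorname{Re}L_p(p)=0$ and a uniform quadratic upper bound $\operatorname{Re}L_p(z)\le -c_0|z-p|^2$ on $\overline{D}$. The exponentials $\psi_p=\exp(L_p)$ are then holomorphic peak functions at $p$ with sharp decay, and their high powers $\psi_p^{m}$ concentrate arbitrarily close to $p$ while remaining uniformly bounded in modulus. The integer $s$ will emerge from a finite cover of $\partial D$ by patches on which these peak-function estimates are uniform, with the factor of two corresponding to a pair of auxiliary holomorphic directions attached to each patch, needed to ensure that distinct bumps contribute independent length in the target.

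Given this setup, I would choose a countable dense set $\{p_k\}\subset\partial D$ distributed evenly across the $s$ patches, together with unit vectors $V_k\in\mathbb{C}^{2s}$ lying in the $\mathbb{C}^2$-block attached to the patch containing $p_k$. I would then pick positive weights $a_k\searrow 0$ small enough that $\|f\|^2+\|h\|^2<1$ on $\overline{D}$ (possible since $h(\overline{D})\subset\mathbb{B}_p$) but with $\sum_k a_k=\infty$, together with exponents $m_k\nearrow\infty$ rising so rapidly that each $\psi_{p_k}^{m_k}$ is essentially supported in a tiny neighbourhood of $p_k$ with negligible interaction between distinct bumps. Setting
\[
f(z)=\sum_{k=1}^{\infty} a_k\,V_k\,\psi_{p_k}(z)^{m_k},
\]
the series converges uniformly on $\overline{D}$ and defines a holomorphic map into $\mathbb{C}^{2s}$.

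The verification splits into three parts. Injectivity and immersivity of $(f,h)$ are inherited from $h$. For properness, any sequence $z_\nu\to\partial D$ accumulates at some $p\in\partial D$, which is approximated by some $p_k$; near $p_k$ the $k$-th bump dominates and pushes $\|(f,h)(z_\nu)\|$ arbitrarily close to $1$. Completeness is the subtle step: any path $\gamma\colon[0,1)\to D$ exiting every compact must pass close enough to infinitely many of the $p_k$ that the transition of each $|\psi_{p_k}^{m_k}|$ from roughly $1/2$ to nearly $1$ contributes a definite multiple of $a_k$ to the Euclidean length of $f\circ\gamma$; since the bumps point in almost independent directions of $\mathbb{C}^{2s}$ and $\sum_k a_k=\infty$, the total length is infinite. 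The main obstacle is the simultaneous calibration of the patch structure, the weights $a_k$, and the exponents $m_k$ so that bumps are sharp enough to deliver their promised length contributions without interfering among themselves, while the total map remains inside the ball after coupling with $h$.
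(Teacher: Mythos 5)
Your peak functions $\psi_{p}=e^{L_p}$ are exactly the right local tool (they are the $\phi_{i,j}$ of the paper), but the one-shot series $f=\sum_k a_kV_k\psi_{p_k}^{m_k}$ with pre-assigned weights $a_k\searrow 0$ cannot deliver either properness or completeness. Properness into $\b_{2s+p}$ means $\|(f,h)(z)\|\to 1$ as $z\to bD$, and your construction has no mechanism forcing this: near a boundary point that is met only by late bumps (or by none of the rapidly shrinking ``active zones''), the norm stays below $\sup_S\|h\|+\sup_k a_k+\text{small}$, which is bounded away from $1$; a single bump of amplitude $a_k\to 0$ cannot ``push the norm arbitrarily close to $1$''. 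This is precisely why the Hakim--Sibony/L\o w/Forstneri\v c scheme, which the paper follows, is \emph{inductive}: at each stage the bump amplitudes are chosen from the current deficit, $|\beta_{i,j}|^2+|\beta_{i+s,j}|^2=\bigl(a^2-\|F(z_{i,j})\|^2\bigr)/2s$, with $(\beta_{i,j},\beta_{i+s,j})$ orthogonal to $(f_i(z_{i,j}),f_{i+s}(z_{i,j}))$ so the correction genuinely increases $\|F\|$ where it is too small (Lemma \ref{Lemma6.1FF}(b)). Amplitudes fixed in advance cannot adapt to the partial sums, so the norm need not tend to $1$ anywhere off your countable set.

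The completeness argument has a parallel gap: it is not true that every path leaving all compacts passes close to infinitely many $p_k$ with a definite gain. Since your exponents $m_k$ must grow fast to prevent interaction, the $k$-th bump is effective only on a ball about $p_k$ whose radius shrinks rapidly; a path approaching a boundary point $\zeta\notin\{p_k\}$ radially eventually misses all but finitely many (or a summable family of) such balls, so $f\circ\gamma$ has finite length, and $h$, being continuous on $\cd$, adds only finite length. Density of the centers is not a substitute for what the paper actually uses: at \emph{every} stage $k$ the centers $z_{i,j}$ are chosen so that the balls $\b(z_{i,j},r)$ cover all of $S$ (Lemma \ref{Lemma5.2FF}), hence every escaping path must cross the shell between $\b(z_{i,j},r)$ and $b\b(z_{i,j},\lambda r)$ of some stage-$k$ bump and, by the estimates (c),(d) of Lemma \ref{Lemma5.3FF}, picks up length at least $E\epsilon_k^{5/16}$ (Lemma \ref{Lemma6.1FF}(e)); completeness then follows from $\sum_k\epsilon_k^{5/16}=\infty$, while $\sum_k\epsilon_k^{1/2}<\infty$ keeps the image in the ball. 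So the correct architecture is a boundary covering repeated at all scales inside an induction whose step size is calibrated to the current deficit, not a single series over a dense sequence of boundary points. (Your observation that injectivity and immersivity are inherited from $h$ in the untouched coordinates is correct and is the same device as in the paper.)
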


%If we apply Theorem \ref{main_theorem} to the map $h(z)=\epsilon z$ for $\epsilon>0$ small enough,
%then we obtain a complete proper holomorphic embedding of $D$ into $\b_{2s+n}$:

%\begin{corollary}
%	Let $D$ be a bounded strictly convex domain with $\ccal^2$-boundary in $\c^n$.	
%	For $N$ large enough there exists a complete proper holomorphic map 
%	$F\colon D\to \b_N$.
%\end{corollary}

The main ingredient in the proof are holomorphic peak functions, the idea which goes back to Hakim and Sibony \cite{HS} and L\o w \cite{Lo1},
and the construction of inner functions on the unit ball. More precisely, we refine the construction of Forstneri\v c \cite{FF} of
a proper holomorphic  map from a strictly convex domain with $\ccal^2$-boundary in $\c^n$
into a unit ball of some Euclidean space; see also \cite{Lo3} where the author 
obtained in addition to the above, a proper holomorphic map into a higher
dimensional unit ball, which extends continuously to the boundary.
%His construction is based on L\o w's
%construction of proper maps from the unit ball to the unit polydisc \cite{Lo4}, see %also \cite{Lo3}.
Note that recently Globevnik \cite{Gl2} proved that there are no complete proper holomorphic maps from the
open unit disc in $\c$ to the open unit bidisc in $\c^2$ which extend continuously to the boundary.

By Fornaess' embedding theorem \cite{Fo} any bounded strictly pseudoconvex domain with $\ccal^2$-boundary embeds
properly holomorphically into a strictly convex domain in Euclidean space. Since the composition of a proper and a complete
proper holomorphic embedding is a complete proper holomorphic embedding we have the following corollary.

\begin{corollary}
	Let $D$ be a bounded strictly pseudoconvex domain with $\ccal^2$-boundary in $\c^n$.	
	For $N$ large enough there exists a complete proper holomorphic embedding 
	$F\colon D\to \b_N$.
\end{corollary}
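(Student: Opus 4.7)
The plan is to reduce the corollary to Theorem \ref{main_theorem} via Fornaess' embedding theorem, exploiting the fact that a proper holomorphic embedding composed with a complete proper holomorphic embedding is again a complete proper holomorphic embedding. First I would invoke Fornaess' theorem \cite{Fo} to produce a bounded strictly convex domain $\tilde D \subset \c^M$ with $\ccal^2$-boundary together with a proper holomorphic embedding $\phi\colon D \to \tilde D$ that extends continuously to $\overline D$ and satisfies $\phi(\di D) \subset \di \tilde D$.

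Next I would apply Theorem \ref{main_theorem} to $\tilde D$. Let $s$ be the integer produced by that theorem. I need a continuous injective holomorphic immersion $h\colon \overline{\tilde D} \to \b_p$ for some $p$; a convenient choice is $p = M$ and $h(w) = \epsilon w$ with $\epsilon > 0$ small enough that the closure of $\epsilon \cdot \tilde D$ lies in $\b_M$. Theorem \ref{main_theorem} then yields a holomorphic map $f\colon \tilde D \to \c^{2s}$ such that $\tilde F := (f,h)\colon \tilde D \to \b_{2s+p}$ is a complete proper holomorphic embedding. Setting $N := 2s + p$ and $F := \tilde F \circ \phi\colon D \to \b_N$ I claim gives the desired map.

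The verification is straightforward. The map $F$ is holomorphic, injective, and an immersion as a composition of two such maps; it is proper because $\phi$ sends $\di D$ into $\di \tilde D$ while $\tilde F$ sends $\di \tilde D$ into $\di \b_N$. For completeness, note that $F^* ds^2 = \phi^*(\tilde F^* ds^2)$, so the length in $F^* ds^2$ of any curve $\gamma\colon [0,1) \to D$ with $\gamma(t) \to \di D$ equals the length in $\tilde F^* ds^2$ of $\phi \circ \gamma$; by properness of $\phi$ the curve $\phi \circ \gamma$ tends to $\di \tilde D$, and by completeness of the metric $\tilde F^* ds^2$ it has infinite length, so $\gamma$ has infinite length in $F^* ds^2$. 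The main obstacle here is not in the reduction itself but rather in Theorem \ref{main_theorem}, which does all the analytic work; once that theorem and Fornaess' embedding are available, the corollary follows from the functoriality of pullback of Riemannian metrics under a proper holomorphic embedding.
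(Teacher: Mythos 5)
Your proposal is correct and follows essentially the same route as the paper: Fornaess' embedding theorem composed with the complete proper embedding from Theorem \ref{main_theorem}, using the standard facts that proper maps compose and that pulling back a complete metric along a proper map preserves infinite length of divergent curves. Your explicit choice $h(w)=\epsilon w$ to feed into Theorem \ref{main_theorem} is a sensible way to supply the initial injective immersion that the paper leaves implicit.
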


Note that one could also extend the construction in the proof of Theorem \ref{main_theorem} 
to obtain the same result where the domain $D$ is
%from strictly convex domains to 
strictly pseudoconvex using the arguments of L\o w \cite{Lo2}.
More precisely, we could use different holomorphic peak functions 
with estimates similar to Lemma 2.1 below, see \cite[Lemma 2.7]{Lo2}.

%The construction of the map in the Theorem \ref{main_theorem}
%is obtained by refining the construction in the proof of 
%\cite[Theorem 1.3]{FF}. %First we recall two lemmas from \cite{FF}.

%%%%%%%%%%
%%%%%%%%%%
%%%%%%%%%%
%%%%%%%%%%
%%%%%%%%%%
%%%%%%%%%%

\section{Proof of Theorem \ref{main_theorem}} 
\label{sec:proof}

Throughout this section, $D$ is a bounded strictly convex domain with $\ccal^2$-boundary in $\c^n$.
Let $S$ denote its boundary and $\nu(w)$ the outward unit normal to $S$ at the point $w\in S$.
For $a\in\c^n$ and $r>0$ let $\b(a,r)$ denote the open ball of radius $r$ centered at $a$ in $\c^n$.
We denote by $\langle\cdot,\cdot\rangle$, $\|\cdot\|$, and $\dist(\cdot,\cdot)$ the  Hermitian inner product, norm, and distance in $\c^n$.

%For the convenience of the reader we recall two lemmas from \cite{FF}.

The following lemma is a slight generalization of \cite[Lemma 5.1]{FF}.

\begin{lemma}
\label{Lemma5.1FF}
	There are constants $\alpha_1$, $\alpha_2$, $r_1>0$ such that the following hold:
	\begin{equation}
	\begin{split}
	\label{eq:Lemma5.1FF}
	\Re \langle w-z,\nu(w)\rangle&\ge \alpha_1 \| z-w\|^2 \text{ for all } w\in S, z\in \cd \text{ such that } \dist(z,bD)<r_1,\\
  \Re \langle w-z,\nu(w)\rangle&\le \alpha_2 \| z-w\|^2 \text{ for all }z,w\in S.
	\end{split}
	\end{equation}
\end{lemma}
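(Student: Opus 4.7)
The plan is to exploit a $\ccal^2$ defining function $\rho$ for $D$ whose real Hessian is uniformly positive definite on $\overline{D}$; the existence of such $\rho$ is the standard content of ``strict convexity with $\ccal^2$-boundary.'' Identifying $\c^n$ with $\r^{2n}$, the outward unit normal at $w\in S$ satisfies $\nu(w)=\nabla_\r\rho(w)/\|\nabla_\r\rho(w)\|$, and the real directional derivative is $d\rho(w)(v)=\|\nabla_\r\rho(w)\|\,\Re\langle\nu(w),v\rangle$, since the real inner product on $\r^{2n}$ coincides with $\Re\langle\cdot,\cdot\rangle$ on $\c^n$. Because $\|\nabla_\r\rho\|$ is bounded above and below by positive constants on the compact set $S$, it is enough to bound $d\rho(w)(w-z)$ from below and above by constant multiples of $\|z-w\|^2$.

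For the lower bound, fix $w\in S$ and $z\in\overline{D}$. Convexity of $D$ ensures that the segment $[w,z]$ lies in $\overline{D}$, hence inside the region where $\mathrm{Hess}_\r\rho\ge c\,\mathrm{Id}$ for some $c>0$. Taylor's formula with integral remainder then yields
\[
\rho(z)=d\rho(w)(z-w)+\int_0^1(1-t)\,\mathrm{Hess}_\r\rho\bigl(w+t(z-w)\bigr)(z-w,z-w)\,dt\ge d\rho(w)(z-w)+\tfrac{c}{2}\|z-w\|^2.
\]
Rearranging and using $\rho(z)\le 0$ gives $d\rho(w)(w-z)\ge \tfrac{c}{2}\|z-w\|^2$, which produces the first inequality with $\alpha_1=c/(2\sup_S\|\nabla_\r\rho\|)$. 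A small bonus is that the argument works for every $z\in\overline{D}$, so the distance restriction $\dist(z,bD)<r_1$ is in fact superfluous and $r_1$ may be chosen arbitrarily.

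For the upper bound, take $z,w\in S$. Compactness of $S$ together with $\ccal^2$-smoothness of $\rho$ yields the uniform Taylor estimate $|\rho(z)-\rho(w)-d\rho(w)(z-w)|\le C\|z-w\|^2$ (with the segment $[w,z]$ again inside $\overline{D}$ by convexity). Since $\rho(z)=\rho(w)=0$, this forces $|d\rho(w)(z-w)|\le C\|z-w\|^2$, and dividing by $\|\nabla_\r\rho(w)\|$ gives the second inequality for a suitable $\alpha_2$.

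The one genuinely substantive ingredient is the existence of a defining function whose real Hessian is uniformly positive definite on a neighborhood of $\overline{D}$, a point subsumed by interpreting the hypothesis as uniform strict (i.e.\ strong) convexity with $\ccal^2$-boundary. Granting that, both inequalities reduce to elementary Taylor expansion combined with convexity, and no further control on $\|z-w\|$ or on $\dist(z,bD)$ is actually needed beyond what is stated.
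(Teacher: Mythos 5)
Your proof is correct in substance but takes a genuinely different route from the paper's. The paper does not argue from scratch: it quotes \cite[Lemma 5.1]{FF} for the upper bound and for the lower bound in the special case $\rho(z)=\rho(w)\in[-\gamma_1,0]$, and then extends the latter to the collar $\{\dist(\cdot,bD)<r_1\}$ by sliding $w$ inward to $w'=w-r\nu(w)$ lying on the level set of $z$, applying the level-set estimate at $w'$, and controlling $\|\nu(w')-\nu(w)\|$ via the $\ccal^2$-regularity of $S$; this uses information only near $S$, which is all that is needed later in the paper. You instead Taylor-expand a defining function at $w$ along the segment $[w,z]\subset\cd$, which is cleaner, self-contained, and even yields the first inequality for \emph{every} $z\in\cd$, so the collar restriction indeed becomes superfluous (the paper keeps it only because its proof piggybacks on the level-set statement of \cite{FF}). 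The one point you should not treat as purely definitional is the ingredient you flag: strict convexity with $\ccal^2$-boundary is a condition on the second fundamental form of $S$, and by continuity it gives a defining function whose real Hessian is uniformly positive definite only in a collar around $S$, not on a neighborhood of all of $\cd$; since your integral remainder runs over the whole segment $[w,z]$, which penetrates deep into $D$, an unconvex extension of $\rho$ inside could in principle destroy the gain. This is repaired by a standard (but genuine) step: either construct a globally strongly convex defining function (e.g.\ a regularized maximum of a collar defining function with a large quadratic), or keep strong convexity only near $S$ and observe that the initial portion of the segment already contributes a fixed multiple of $\|z-w\|^2$ provided the Hessian is nonnegative along the rest, or handle $\|z-w\|$ bounded away from $0$ by compactness, using that the tangent hyperplane at $w$ meets $\cd$ only at $w$. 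With any of these fixes your argument is complete; the upper bound part is correct as written and matches what the paper simply cites from \cite{FF}.
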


\begin{proof}
The existence of $\alpha_2>0$ satisfying the second estimate was already a part of \cite[Lemma 5.1]{FF}.
Let $\rho$ denote a $\ccal^2$-defining function of $D$ such that $\{z\colon\rho(z)<0\}=D$ and
$\grad \rho(z)$ does not vanish for any $z\in bD$. Then there exists $\gamma_1>0$ such that
$\grad \rho(z)$ does not vanish for any $z$, $\rho(z)\in  [-\gamma_1,0]$,  and
the proof of \cite[Lemma 5.1]{FF} provides a constant $\alpha_1>0$ such that
\begin{equation}
\label{eq:lemma21}
 \Re \langle w-z,\nu(w)\rangle \ge \alpha_1 \| z-w\|^2 \text{ for all } z,w\in \cd\text{ such that } \rho(z)=\rho(w)\in [-\gamma_1,0].
\end{equation}
We may assume that $\alpha_1>0$ is so small that $1-\alpha_1(1+2\diam D)>0$, where $\diam D$ denotes the diameter of $D$.
Since the boundary $bD$ is of class $\ccal^2$ we can choose $r_1$, $0<r_1<\frac 12$, so small that 
\begin{equation}
\label{eq:lemma21a}
	\begin{split}
	&\{z\in \cd\colon \dist (z,bD)<r_1\}\subset \{w-r\nu(w)\colon w\in S,\ r\in[0,2r_1]\}\cap\rho^{-1}([-\gamma_1,0]),\\
	&\|\nu(w-r\nu(w))-\nu(w)\|\le \frac{1-\alpha_1(1+2\diam D)}{\diam D} r \text{ for all } w\in S, r\in [0,2r_1].
	\end{split}
\end{equation}
By the choice of $r_1$, for any  $w\in S$ and  $z\in \cd$ such that $\dist(z,bD)<r_1$ there is
$r\in[0,2r_1]$ such that $\rho(w-r\nu(w))=\rho(z)\in[-\gamma_1,0]$. Letting $w'=w-r\nu(w)$ we get
\begin{equation}
	\label{lema21b}
	\begin{split}
		\Re \langle w-z,\nu(w)\rangle&=\Re \langle w'-z,\nu(w)\rangle+r\\
		&=\Re \langle w'-z,\nu(w')\rangle+\Re \langle w'-z,\nu(w)-\nu(w')\rangle + r\\
		&\stackrel{\eqref{eq:lemma21},\eqref{eq:lemma21a}}{\ge}\alpha_1 \|w'-z\|^2  - (1-\alpha_1(1+2\diam D)) r + r.
	\end{split}
\end{equation}
On the other hand, we have
\begin{equation*}
	\begin{split}
		\|w-z\|^2&=\|w-w'+w'-z\|^2\\
		&\le r^2+\|w'-z\|^2+2r \|w'-z\|\\ 
		&\le \|w'-z\|^2 + (1+2\diam D)r.
	\end{split}
\end{equation*}
By \eqref{lema21b} we obtain 
$\Re \langle w-z,\nu(w)\rangle\ge \alpha_1\|w-z\|^2$,
which completes the proof.
\end{proof}

For the convenience of the reader we recall the next covering lemma from \cite{FF}, see also \cite{Lo4}:

\begin{lemma}\cite[Lemma 5.2]{FF}
\label{Lemma5.2FF}
	For every $\lambda>1$ there exists an integer $s>0$ with the following property:
	For each $r>0$ there are $s$ families of balls $\Fcal_1,\ldots,\Fcal_s$,
	\[
	\Fcal_i=\{\b(z_{i,j},\lambda r):1\le j\le N_i\},\]
	 with centers $z_{i,j}\in S$,
	such that the balls in each family are pairwise disjoint, and 
	\begin{equation}
		\label{eq:Lemma5.2FF}
		S\subset \bigcup_{i=1}^s\bigcup_{j=1}^{N_i}\b(z_{i,j}, r). 
	\end{equation}
\end{lemma}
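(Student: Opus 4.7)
The plan is a standard Vitali-type packing-and-colouring argument, in which the integer $s$ arises as the chromatic number of an auxiliary graph whose maximum degree I bound by volume comparison in $\c^n$.

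First I would select a maximal collection of points $\{z_k\}_{k\in K}\subset S$ such that the open balls $\b(z_k,r/2)$ are pairwise disjoint, equivalently $\|z_j-z_k\|\ge r$ for $j\ne k$. Maximality forces every $y\in S$ to satisfy $\|y-z_k\|<r$ for some $k$, since otherwise the packing could be enlarged by adjoining $y$; hence the balls $\b(z_k,r)$ already cover $S$.

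Next I would quantify the failure of disjointness of the enlarged balls. Define a graph $G$ on the vertex set $K$ by declaring $j\sim k$ precisely when $\b(z_j,\lambda r)\cap \b(z_k,\lambda r)\ne\emptyset$, i.e.\ $\|z_j-z_k\|<2\lambda r$. For a fixed $z_j$, every neighbour $z_k$ satisfies $\b(z_k,r/2)\subset \b(z_j,(2\lambda+1)r)$, and these $r/2$-balls are pairwise disjoint by the previous step. Comparing Lebesgue volumes in $\c^n\simeq\r^{2n}$ gives
\[
\deg_G(z_j)\;\le\;\frac{\mathrm{vol}\,\b(z_j,(2\lambda+1)r)}{\mathrm{vol}\,\b(z_j,r/2)}\;=\;(4\lambda+2)^{2n},
\]
a bound depending only on $\lambda$ and $n$, not on $r$ or $z_j$. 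A graph of maximum degree $d$ is properly $(d+1)$-colourable by the usual greedy argument, so I may partition $K$ into $s:=(4\lambda+2)^{2n}+1$ colour classes. Letting $\Fcal_i$ consist of those balls $\b(z_{i,j},\lambda r)$ whose centre received the $i$-th colour produces $s$ families of pairwise disjoint balls by the very definition of $G$, while the $r$-shrinkings of all of them still cover $S$ by the first step, as required by \eqref{eq:Lemma5.2FF}.

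I do not foresee any serious obstacle. The only point deserving attention is that $s$ must be independent of $r$, but the volume-comparison estimate is homogeneous in $r$ and so the bound $(4\lambda+2)^{2n}$ is scale-free. The argument uses only the ambient Euclidean geometry of $\c^n$ and does not rely on smoothness or curvature properties of $S$ beyond $S$ being the set from which centres are chosen; in particular the same proof yields the analogous statement for an arbitrary subset of $\c^n$.
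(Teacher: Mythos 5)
Your proof is correct. The paper does not prove this lemma at all --- it is quoted from \cite{FF} (see also \cite{Lo4}) --- and your argument (a maximal $r/2$-separated set of centres in $S$, a volume-comparison bound $(4\lambda+2)^{2n}$ on the degree of the intersection graph of the $\lambda r$-balls, and a greedy $(d+1)$-colouring to split the centres into $s$ families) is the standard, complete proof of exactly this statement; the only point left implicit, that the separated set is finite so the numbers $N_i$ and the greedy colouring make sense, follows at once from the compactness of $S$.
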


Let $\alpha_1$ and $\alpha_2$ be as in Lemma \ref{Lemma5.1FF} and
let 
\begin{equation}
 \label{eq:lambda}
 \lambda=4\sqrt{\frac{\alpha_2}{\alpha_1}}.
\end{equation}
Note that our choice of the constant $\lambda$  is different from the one in \cite[(5.7)]{FF},
because we need more precise estimates in the next lemma.

%We continue as in \cite{FF}: 
For the chosen $\lambda$ we get a positive integer $s$ satisfying the properties
in Lemma \ref{Lemma5.2FF}. Therefore, for any $r>0$ we have $s$ families of balls  $\Fcal_1,\ldots,\Fcal_s$,
$\Fcal_i=\{\b(z_{i,j},\lambda r):1\le j\le N_i\}$, $z_{i,j}\in S$, such that the balls in each $\Fcal_i$ are pairwise disjoint
and balls with the same centers and radii $r$ cover $S$ \eqref{eq:Lemma5.2FF}.

For each $1\le i\le s$ and  $1\le j\le N_i$ we define $z_{i+s,j}=z_{i,j}$ and $\Fcal_{i+s}=\Fcal_i$.
Further, for $m>0$, $1\le i\le 2s$ and $1\le j\le N_i$ we define
\begin{equation}
	\label{eq:defphi}
 	\phi_{i,j}(z)=e^{-m\langle z_{i,j}-z,\nu(z_{i,j})\rangle},\ z\in\cd.
\end{equation}
By \eqref{eq:Lemma5.1FF} we get the following estimates
\begin{equation}
\begin{split}
\label{eq:esimate5.1}
|\phi_{i,j}(z)|&\le e^{-\alpha_1m\|z-z_{i,j}\|^2}\quad \text{ for all } w\in S, z\in \cd \text{ such that } \dist(z,bD)<r_1,\\
|\phi_{i,j}(z)|&\ge e^{-\alpha_2m\|z-z_{i,j}\|^2}\quad \text{ for each } z\in S.
\end{split}
\end{equation}
For given $|\beta_{i,j}|\le 1$, let $g_i$ be the entire function
\begin{equation}
	\label{eq_5.11}
	g_i(z)=\sum_{j=1}^{N_i} \beta_{i,j} \phi_{i,j}(z),\qquad \ z\in \cd.
\end{equation}

The next lemma is similar to \cite[Lemma 5.3]{FF}, with the following differences:
The estimate in (b) holds on $\cd\cap \b(z_{i,j},\lambda r)$ whereas in 
\cite{FF} it holds on $S\cap \b(z_{i,j},\lambda r)$.
The growth of $\eta$ in (c) is different since we chose different $\lambda$, and 
the property (d) is added since it will be needed in the proof of  Theorem \ref{main_theorem}.

\begin{lemma}
\label{Lemma5.3FF} Let $r_1$, $\lambda$, $s$, $\Fcal_i$, $g_i$, $\beta_{i,j}$, and $\phi_{i,j}$ be as above.
	For each  sufficiently small $\eta>0$ there are $m,r>0$, $0<\lambda r<r_1$, such that for  each
	$i$, $1\le i\le 2s$,  the following hold for
	the family of balls $\Fcal_i$  and for the functions $g_i$:
	\begin{enumerate}[\rm(a)]
		\item If a point $z\in S$ lies in no ball in $\Fcal_i$, then $|g_i(z)|<\eta$.
		\item If $z\in \cd\cap \b(z_{i,j},\lambda r)$ for some $j$, then $|g_i(z)-\beta_{i,j}\phi_{i,j}(z)|<\eta$.% (cf. \eqref{eq_5.11}).
		\item If $z\in S\cap \b(z_{i,j}, r)$ for some $j$, then $|\phi_{i,j}(z)|\ge C\eta^{\frac1{16}}$, where the constant $C$ is
		independent of $r$, $m$ and $\eta$.
		\item If $z\in \cd\cap b\b(z_{i,j},\lambda r)$ for some $j$, then $|\phi_{i,j}(z)|<\eta^{\frac 23}$.
	\end{enumerate}
	Moreover, we can choose $r>0$ arbitrarily small and make $m>0$ as large as we want.   
\end{lemma}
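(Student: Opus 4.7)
The plan is to fix a single relation
\[
mr^2 \;=\; \frac{\log(1/\eta) + M}{16\alpha_2},
\]
where $M > 0$ is a constant depending only on the dimension $n$, to be chosen later. Because this constrains only the product $mr^2$, we remain free to pick $r > 0$ arbitrarily small (hence $m$ arbitrarily large), subject only to $\lambda r < r_1$; the moreover-clause is then automatic.

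Parts (c) and (d) fall out directly from \eqref{eq:esimate5.1}. For (c), if $z \in S \cap \b(z_{i,j},r)$, the second inequality in \eqref{eq:esimate5.1} gives $|\phi_{i,j}(z)| \ge e^{-\alpha_2 mr^2} = e^{-M/16}\eta^{1/16}$, so one takes $C := e^{-M/16}$. For (d), since $z \in \cd \cap b\b(z_{i,j},\lambda r)$ satisfies $\dist(z,bD) \le \lambda r < r_1$, the first inequality in \eqref{eq:esimate5.1} applies; combined with the identity $\alpha_1\lambda^2 = 16\alpha_2$ forced by \eqref{eq:lambda}, it gives $|\phi_{i,j}(z)| \le e^{-16\alpha_2 mr^2} = e^{-M}\eta$, which is below $\eta^{2/3}$ once $\eta$ is small.

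The bulk of the work is (a) and (b), both of which reduce to a Gaussian-tail bound via a shell decomposition. For (a), partition $\{1,\ldots,N_i\}$ into the shells $A_k = \{j : k\lambda r \le \|z-z_{i,j}\| < (k+1)\lambda r\}$, $k \ge 1$ (using that $z$ lies in no ball of $\Fcal_i$). The pairwise disjointness of the balls $\b(z_{i,j},\lambda r)$ together with the containment $\b(z_{i,j},\lambda r) \subset \b(z,(k+2)\lambda r)$ for $j \in A_k$ yields by volume comparison $|A_k| \le (k+2)^{2n}$, and the first inequality in \eqref{eq:esimate5.1} gives $|\phi_{i,j}(z)| \le (e^{-M}\eta)^{k^2}$; thus
\[
|g_i(z)| \le \sum_{k \ge 1}(k+2)^{2n}\bigl(e^{-M}\eta\bigr)^{k^2}.
\]
For (b), the same disjointness forces $\|z - z_{i,k}\| \ge \lambda r$ for every $k \ne j$ when $z \in \b(z_{i,j},\lambda r)$, so the identical shell bound applies to $g_i(z)-\beta_{i,j}\phi_{i,j}(z)$. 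Choosing $M$ large enough that $3^{2n}e^{-M} < 1/2$ (with a little slack for the tails) and then $\eta$ sufficiently small makes both sums $< \eta$.

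The main obstacle is the tight numerology. The choice $\lambda = 4\sqrt{\alpha_2/\alpha_1}$ was engineered precisely to produce the factor-$16$ gap $\alpha_1\lambda^2 = 16\alpha_2$ between the upper decay rate driving (a), (b), (d) and the lower-bound rate driving (c). This is the minimal ratio permitting a single choice of $mr^2$ to deliver simultaneously the $\eta^{1/16}$ lower bound and the super-quadratic decay $(e^{-M}\eta)^{k^2}$ across shells; the role of the additive $M$ is merely to absorb the fixed combinatorial prefactor $(k+2)^{2n}$ coming from the shell count.
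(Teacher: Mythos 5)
Your proposal is correct, and for parts (a)--(c) it is essentially the paper's argument: you impose the same normalization $mr^2=\tfrac{1}{16\alpha_2}\bigl(\ln\tfrac1\eta+M\bigr)$ (this is \eqref{eq:mr2} with $M=\ln C_2$), you read (c) off the lower bound in \eqref{eq:esimate5.1} exactly as the paper does, and your shell decomposition with the volume-comparison count $|A_k|\le (k+2)^{2n}$ is precisely the argument of \cite[Lemma 5.3]{FF} that the paper cites instead of repeating, with the combinatorial prefactor absorbed into the additive constant $M$ rather than into the paper's requirement $\beta=16\alpha_2mr^2\ge\tfrac43$; note also that your bound for (b) uses the first inequality of \eqref{eq:esimate5.1} at points $z\in\cd\cap\b(z_{i,j},\lambda r)$, which is exactly the extension to the collar that the paper flags as the reason it strengthened \cite[Lemma 5.1]{FF} to Lemma \ref{Lemma5.1FF}. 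The genuine divergence is in (d). The paper splits $\cd\cap b\b(z_{i,j},\lambda r)$ by the normal projection $\pi$ into a part where the tangential factor $e^{-m\Re\langle z_{i,j}-\pi(z),\nu(z_{i,j})\rangle}$ is small and a part where, via the Pythagorean estimate, the normal factor $e^{-ms(z)}$ is small; this needs an auxiliary $\mu$ and smallness of $r$. You instead apply the collar estimate of Lemma \ref{Lemma5.1FF} directly on the sphere: such $z$ satisfies $\dist(z,bD)\le\lambda r<r_1$ and $\|z-z_{i,j}\|=\lambda r$, so $\alpha_1\lambda^2=16\alpha_2$ from \eqref{eq:lambda} gives $|\phi_{i,j}(z)|\le e^{-16\alpha_2mr^2}=e^{-M}\eta<\eta^{2/3}$ in one line. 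This is a genuine simplification, and it removes any smallness requirement on $r$ in (d); what the paper's longer route buys is that it only invokes the boundary-to-boundary estimates (both applications of \eqref{eq:Lemma5.1FF} there are at points of $S$), so it does not lean on the collar extension at interior points of the sphere. The only loose end in your write-up is cosmetic: make the tail choice explicit, e.g.\ fix $M=M(n)$ with $\sum_{k\ge1}(k+2)^{2n}e^{-Mk^2}<1$, so that both shell sums in (a) and (b) are $<\eta$ for every $\eta<1$.
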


\begin{proof}
Properties (a), (c) are proved the same way as in the proof of \cite[Lemma 5.3]{FF}.
We recall some parts of the proof, because we need the right choices of constants
in the proof of (d).

If $z\in S$ lies in no ball in $\Fcal_i$ then as in \cite{FF} we obtain 
 $|g_i(z)|<C_2e^{-\beta}$, where $\beta=16\alpha_2mr^2\ge \frac 4 3$
and the constant $C_2$ does not depend on $r$, $m$ or $\eta$.
Here $\beta$ is slightly different than in \cite{FF} since we chose a different $\lambda$.
Given $\eta>0$, we take  $m>0$ and $r>0$ such that $C_2e^{-\beta}=\eta$. 
If $\eta\le C_2e^{-\frac 4 3}$,
then $\beta\ge  \frac 4 3$ as needed.
Since 
\begin{equation}
\label{eq:mr2}
 mr^2=\frac 1 {16\alpha_2}\ln\frac{C_2}{\eta}  
\end{equation}
we can choose $r>0$ arbitrarily small and make $m$ as large as we want.
This proves (a). 

For the proof of (b) note that the  second estimate in \eqref{eq:esimate5.1} holds also on $\overline{\b}(z_{i,j},\lambda r)$, and then
the same proof as in \cite{FF} gives (b).

Take $z\in S\cap \b(z_{i,j}, r)$ and according to \eqref{eq:esimate5.1}
and \eqref{eq:mr2} we get
\[  |\phi_{i,j}(z)|\ge e^{-\alpha_2mr^2}= C_2^{-\frac1{16}}\eta^{\frac1{16}},     \]
which proves (c).

To prove (d), denote by $\pi\colon \cd\cap \b(z_{i,j},\lambda r)\to  S \cap \b(z_{i,j},\lambda r)$
the orthogonal projection to the boundary in the $\nu(z_{i,j})$ direction. 
By strict convexity, the map $\pi$ is well defined for 
any $r>0$ small enough, and 
for any $z\in\cd\cap \b(z_{i,j},\lambda r)$ there exists $s(z)\ge0$ such that
$z=\pi(z)-s(z)\nu(z_{i,j})$, i.e. $s(z)=\Re\langle \pi(z)-z,\nu(z_{i,j})\rangle$.
By \eqref{eq:defphi} we have
\begin{equation} 
\label{eq:lema5.3}
|\phi_{i,j}(z)|=e^{-m\Re\langle z_{i,j}-\pi(z),\nu(z_{i,j})\rangle}e^{-ms(z)}. 
\end{equation}
Both factors on the right are not bigger than 1.
We split $\cd\cap b\b(z_{i,j},\lambda r)$ into two parts 
in such a way that on each part one of the factors
is small enough to obtain the estimate (d). 
Fix any $\mu$, $\sqrt{\frac 23} \lambda<\mu<\lambda$.
Let $S_1=\cd \cap b\b(z_{i,j},\lambda r)\cap \pi^{-1}(S\cap \b(z_{i,j},\mu r))$ and 
$S_2=\cd \cap b\b(z_{i,j},\lambda r)\setminus S_1$.
% for some $\mu\in(1,\lambda)$ to be determined later.
For $z\in S_2$ we have $\|\pi(z)-z_{i,j}\|\ge  \mu r$, thus we get %by \eqref{eq:esimate5.1},
%\eqref{eq:mr2} and \eqref{eq:lema5.3}
\[ |\phi_{i,j}(z)|\stackrel{\eqref{eq:lema5.3}}{\le} e^{-m\Re\langle z_{i,j}-\pi(z),\nu(z_{i,j})\rangle}
\stackrel{\eqref{eq:Lemma5.1FF}}{\le} e^{-\mu^2\alpha_1mr^2}\stackrel{\eqref{eq:mr2},\eqref{eq:lambda}}{=}
{\left(\frac{\eta}{C_2}\right)}^{\frac{\mu^2}{\lambda^2}}
< \eta^{\frac 23},\]
%where we fix any $\mu$, $\sqrt{\frac 23} \lambda<\mu<\lambda$, and $\eta$ is small enough.
for each $\eta>0$ small enough.
For $z\in S_1$ we have $\|\pi(z)-z_{i,j}\|<  \mu r$, therefore   Pythagorean theorem and {\eqref{eq:Lemma5.1FF}}
imply
\[ s(z)=\Re\langle z_{i,j}-z,\nu(z_{i,j})\rangle-\Re\langle z_{i,j}-\pi(z),\nu(z_{i,j})\rangle
\ge \sqrt{\lambda^2-\mu^2} r -\alpha_2 \mu^2r^2.\]
%By \eqref{eq:mr2} and \eqref{eq:lema5.3} for any given $\eta>0$ there is $r>0$ so small that 
For any given $\eta>0$,  we have
\[ |\phi_{i,j}(z)|\stackrel{\eqref{eq:lema5.3}}{\le} e^{-\sqrt{\lambda^2-\mu^2}  mr+\alpha_2 \mu^2mr^2}
\stackrel{\eqref{eq:mr2}}{=} e^{(-\frac 1r\sqrt{\lambda^2-\mu^2}+\alpha_2 \mu^2)(\frac 1{16\alpha_2})\ln{\frac{C_2}{\eta}}}
<\eta^{\frac 23},\]
where the last estimate holds for any  $r>0$ small enough.
This proves (d).
\end{proof}

The following lemma refines \cite[Lemma 6.1]{FF}.
The main addition is part (e) which guarantees that 
%the limit map in the inductive construction in the 
%proof of Theorem \ref{main_theorem} is complete. 
we increase the induced distance between a given point in $D$ and the 
boundary $S$ by a certain amount. 
Notice that the condition (iii) is slightly different from \cite[Lemma 6.1 (iii)]{FF};
it provides control of how much we gain in (e).
We shall denote the induced distance
by a map $F$ on $D$ by $\dist_F$.

\begin{lemma}
\label{Lemma6.1FF}
Let $D$, $S=bD$ and $h$ be as in the statement of Theorem \ref{main_theorem} and $s$ as above.
Then there is $\epsilon_0>0$ such that the following implication holds:
If we are given
\begin{enumerate}[\rm(i)]
	\item numbers $a$ and $\epsilon$, $0<\epsilon<\epsilon_0$, 
		such that $a-\epsilon^{\frac 12}>\frac 12$ and $a+\epsilon<1$, 
	\item a compact subset $K\subset D$,
	\item	a continuous map $f=(f_1,\ldots,f_{2s})\colon \cd\to\c^{2s}$, holomorphic in $D$, such that for the map $F=(f,h)$ we have  
		$\|F(z)\|<a-\epsilon^{\frac 12}$ for each $z\in S$,  
	\item a point $p\in D$ and a number $\sigma>0$ such that $\dist_{F}(p,S)>\sigma$, and
	\item a number $\delta>0$,
\end{enumerate}
then there exists an
entire mapping $G=(g_1,\ldots,g_{2s},0,\ldots,0)\colon \c^n\to\c^{2s+p}$ satisfying the following properties:
\begin{enumerate}[\rm(a)]
	\item $\|(F+G)(z)\|\le a+\epsilon$ for all $z\in S$,
	\item if $\|(F+G)(z)\|\le a-\epsilon^{\frac 1 7}$ for some  $z\in S$, then  
	$\|(F+G)(z)\|>\|F(z)\|+\epsilon^{\frac 2 7}$,
	\item $\|G(z)\|<\delta$ for all $z\in K$,
	\item $\|G(z)\|^2<1-\|F(z)\|$ for all $z\in S$,
	\item $\dist_{F+G}(p,S)>\sigma+E\epsilon^{\frac 5{16}}$,
where the constant $E$ depends only on $\epsilon_0$.
\end{enumerate}
\end{lemma}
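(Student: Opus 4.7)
The plan is to follow the construction of Forstneri\v c in \cite[Lemma 6.1]{FF}, using the refined peak-function estimates of Lemma~\ref{Lemma5.3FF}, and to add a quantitative length argument to obtain the new property (e).

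First, I set $\eta$ proportional to $\epsilon$ and apply Lemma~\ref{Lemma5.3FF} to obtain $m,r>0$ with $\lambda r<r_1$; I shrink $r$ further so that the balls $\b(z_{i,j},\lambda r)$ are disjoint from $K$ and so that the uniform modulus of continuity of $F$ on $\cd$ satisfies $\omega_F(2\lambda r)\le\tfrac14 c\epsilon^{5/16}$, where $c$ is the constant appearing in the extremal bound below. For each center $z_{i,j}$ (and its twin $z_{i+s,j}=z_{i,j}$), I choose coefficients $\beta_{i,j},\beta_{i+s,j}\in\overline{\d}$ as in \cite{FF}, so that the added vector $(\beta_{i,j},\beta_{i+s,j})$ pushes $(f_i(z_{i,j}),f_{i+s}(z_{i,j}))$ outward until $\|(F+G)(z_{i,j})\|$ is as close to $a$ as possible---the doubled family structure $\Fcal_{i+s}=\Fcal_i$ supplies the necessary phase freedom. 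In the extremal case $\|F(z_{i,j})\|\approx a-\epsilon^{1/2}$ this forces $\max(|\beta_{i,j}|,|\beta_{i+s,j}|)\gtrsim\epsilon^{1/4}$, and this quantitative lower bound on the push is the essential new input for (e). With $g_i=\sum_j\beta_{i,j}\phi_{i,j}$ and $G=(g_1,\ldots,g_{2s},0,\ldots,0)$, properties (a), (b), (c), (d) follow from the arguments of \cite[Lemma 6.1]{FF} together with parts (a)--(d) of Lemma~\ref{Lemma5.3FF}; Lemma~\ref{Lemma5.3FF}(b) furnishes the approximation $g_i\approx\beta_{i,j}\phi_{i,j}$ on the full closed half-ball $\cd\cap\b(z_{i,j},\lambda r)$ needed for (d), and (d) itself is a rearrangement of (a).

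The hard part is (e). Fix a rectifiable curve $\gamma:[0,1]\to\cd$ with $\gamma(0)=p$, $q:=\gamma(1)\in S$, and pick $(i,j)$ with $q\in\b(z_{i,j},r)$. By Lemma~\ref{Lemma5.3FF}(c), $|\phi_{i,j}(q)|\gtrsim\epsilon^{1/16}$, so combined with the extremal bound above at least one coordinate of $G(q)$---call it $g$, with paired $F$-coordinate $f$---satisfies $|g(q)|\ge c\epsilon^{5/16}$. Let $t_0$ be the last time $\gamma$ lies on $\partial\b(z_{i,j},\lambda r)\cap\cd$ before reaching $q$; Lemma~\ref{Lemma5.3FF}(d) yields $|g(\gamma(t_0))|=o(\epsilon^{5/16})$. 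Since both endpoints lie in $\b(z_{i,j},\lambda r)$, the $(F+G)$-length of the inside segment is bounded below by a single-coordinate displacement:
\[\mathrm{length}_{F+G}(\gamma|_{[t_0,1]})\ge |(f+g)(q)-(f+g)(\gamma(t_0))|\ge c\epsilon^{5/16}-o(\epsilon^{5/16})-\omega_F(2\lambda r)\ge\tfrac12 c\epsilon^{5/16}.\]
To close the argument one establishes $\mathrm{length}_{F+G}(\gamma|_{[0,t_0]})\ge\mathrm{length}_F(\gamma|_{[0,t_0]})-o(\epsilon^{5/16})$ by carefully controlling $\mathrm{length}_G(\gamma|_{[0,t_0]})$: parts (a) and (d) of Lemma~\ref{Lemma5.3FF} keep each $\phi_{i,j}$ concentrated inside its own $\lambda r$-ball, and Lemma~\ref{Lemma5.2FF} bounds the number of peak-ball crossings of $\gamma$. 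Combined with $\mathrm{length}_F(\gamma)>\sigma$ and $\mathrm{length}_F(\gamma|_{[t_0,1]})\ge 0$, this yields $\mathrm{length}_{F+G}(\gamma)>\sigma+\tfrac14 c\epsilon^{5/16}$, proving (e) with $E=c/4$.

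The principal obstacle is this last bookkeeping step: balancing the $G$-length accumulated over peak-ball crossings along the outside segment against the gain $\tfrac12 c\epsilon^{5/16}$ from the inside. Because $\|dG\|$ is very large near each peak center, one cannot simply integrate $\|dG\|$ pointwise; instead one estimates the contribution of each crossing by bounding the image length of a single peak passage by a constant multiple of $\|\beta\|_\infty\lesssim\epsilon^{1/4}$, since the image of a peak passage is a single small oscillation of $g$ and Lemma~\ref{Lemma5.3FF}(d) ensures that the peak has already decayed to $o(\epsilon^{5/16})$ at the ball boundary. Shrinking $r$ aggressively (and correspondingly enlarging $m$) keeps the sum of these contributions strictly below $\tfrac14 c\epsilon^{5/16}$, completing the argument.
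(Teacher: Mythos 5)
Your construction of $G$ and your treatment of (a)--(d), as well as the jump estimate inside the final shell (the lower bound $\length_{F+G}(\gamma|_{[t_0,1]})\ge c\,\epsilon^{5/16}$ obtained from $\max(|\beta_{i,j}|,|\beta_{i+s,j}|)\gtrsim\epsilon^{1/4}$, Lemma~\ref{Lemma5.3FF}(c) at the boundary endpoint and Lemma~\ref{Lemma5.3FF}(d) on $b\b(z_{i,j},\lambda r)$), agree with the paper. The genuine gap is in how you handle the rest of the path. You claim $\length_{F+G}(\gamma|_{[0,t_0]})\ge\length_F(\gamma|_{[0,t_0]})-o(\epsilon^{5/16})$ by bounding the $G$-variation of each peak-ball passage by a multiple of $\|\beta\|_\infty\lesssim\epsilon^{1/4}$ and invoking Lemma~\ref{Lemma5.2FF} to bound the number of passages. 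Neither step works: Lemma~\ref{Lemma5.2FF} is a covering/disjointness statement about balls and bounds nothing about how often an arbitrary rectifiable path enters them (a path may re-enter one ball arbitrarily often); the variation of $g_i$ along a single passage is not controlled by $\sup|g_i|$ on the ball, because the phase of $\phi_{i,j}(z)=e^{-m\langle z_{i,j}-z,\nu(z_{i,j})\rangle}$ rotates at rate comparable to the huge constant $m$, so a path winding tangentially at nearly constant $|\phi_{i,j}|$ accumulates $G$-length far exceeding $|\beta_{i,j}|$; even granting your per-passage bound, $\epsilon^{1/4}>\epsilon^{5/16}$, so one extra passage already swamps the claimed gain, and shrinking $r$ reduces neither the per-passage bound (the $\beta$'s do not shrink with $r$) nor the number of passages (it increases the number of balls). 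Finally, your conclusion also needs $\length_F(\gamma|_{[0,t_0]})\gtrsim\sigma$, which does not follow from $\dist_F(p,S)>\sigma$: since $h$ is only continuous on $\cd$, the $F$-length of $\gamma$ may concentrate on $\gamma|_{[t_0,1]}$, and the observation ``$\length_F(\gamma|_{[t_0,1]})\ge 0$'' cuts in the wrong direction.

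The paper's proof of (e) avoids all of this bookkeeping by a different decomposition of the path. Using (iv) it fixes a compact set $L\subset D$ with $\dist_F(p,bL)>\sigma$, enlarges $K$ so that $L\Subset\mathring K$, and takes $r$ so small that every ball $\b(z_{i,j},\lambda r)$ is disjoint from $L$. On the piece from $p$ to $bL$ nothing has to be gained: uniform smallness of $G$ on $K$ gives $\ccal^1$-smallness on $L$, hence $\dist_{F+G}(p,bL)>\sigma$. On the piece from $bL$ to $S$ the paper never compares with the $F$-length at all: any such path must cross the shell of the one ball containing its boundary endpoint, and the single-coordinate jump (your own estimate) gives $\dist_{F+G}(bL,S)\ge E\epsilon^{5/16}$. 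Splitting an arbitrary path from $p$ to $S$ at its last visit to $bL$ then yields (e). To repair your argument, replace the outside-segment bookkeeping by this splitting at $bL$; as written, that step is a genuine gap.
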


Note that the fact that $h$ is an injective holomorphic immersion implies
that $F$ and $G$ are injective holomorphic immersions.

\begin{proof}
The proof of (a)-(d) follows the proof of \cite[Lemma 6.1]{FF}.
To obtain (e), we need a slightly different condition (iii)
and we need to choose different growth of $\epsilon$ in (b). 
The main idea of the proof is the same but we need to repeat the construction to make 
the necessary adjustments for the second part of the proof. 

Let $\eta=\frac \epsilon {120s}$. 
Let $r_1>0$ be the number provided by Lemma \ref{Lemma5.1FF} and $\lambda>0$ defined by \eqref{eq:lambda}.
By continuity of $F$ on $\cd$, there is $r_0$, $0<r_0<r_1$, such that 
for any $z,w\in\cd$ with $\|z-w\|<2\lambda r_0$ we have 
\begin{equation}
\label{eq:6.1FF}
	|f_i(z)-f_i(w)|<\eta,\quad 1\le i\le 2s,\quad \left| \|F(z)\|-\|F(w)\|\right|<\eta.
\end{equation}
Given $r$, $0<r<r_0$, to be chosen later we choose $s$ families of balls $\Fcal_1,\ldots,\Fcal_s$,
$\Fcal_i=\{\b(z_{i,j},\lambda r):1\le j\le N_i\}$, with centers $z_{i,j}\in S$,
such that the balls in each family are pairwise disjoint and the small balls also
cover $S$ (Lemma \ref{Lemma5.2FF}). Let $z_{i+s,j}=z_{i,j}$ and $\Fcal_{i+s}=\Fcal_i$, $1\le i\le s$.

We define the coefficients $\beta_{i,j}$ and $\beta_{i+s,j}$, $1\le i\le s$, $1\le j\le N_i$, \eqref{eq_5.11}, as follows:
\begin{equation}
\begin{split} \label{eq:6.26.3FF}
f_i(z_{i,j})\overline{\beta_{i,j}}+f_i(z_{i+s,j})\overline{\beta_{i+s,j}}&=0,\\
|\beta_{i,j}|^2+|\beta_{i+s,j}|^2&=\frac{a^2-\|F(z_{i,j})\|^2}{2s}.
\end{split}
\end{equation}

This implies that the vector $(\beta_{i,j},\beta_{i+s,j})$ is perpendicular to the vector
$(f_i(z_{i,j}),f_i(z_{i+s,j}))$ and $|\beta_{i,j}|<1$, $|\beta_{i+s,j}|<1$.
We shall prove that the entire map $G=(g_1,\ldots,g_{2s},0,\ldots,0)$, where $g_i$ are defined by
\eqref{eq_5.11} and satisfy Lemma \ref{Lemma5.2FF}, has the properties (a)-(e),
provided that the constant $m>0$ is chosen large enough and $r>0$ is chosen small enough. 

Part (a) is proved exactly as in the proof \cite[Lemma 6.1]{FF} and will not be repeated.
The proof of (b) is very similar but the choice of the constants is different,  so for the 
sake of the reader we repeat the relevant parts.
As in the proof of \cite[Lemma 6.1]{FF} we obtain:
For  $D_i(z)=|f_i(z)+g_i(z)|^2+|f_{i+s}(z)+g_{i+s}(z)|^2-|f_i(z)|^2-|f_{i+s}(z)|^2$ we have
\begin{equation}
\begin{split}
\label{eq:6.5FF}
D_i(z)&=\left(|\beta_{i,j}|^2+|\beta_{i+s,j}|^2\right)|\phi_{i,j}(z)|^2 + O(\epsilon),
\text{ if } z\in\b(z_{i,j},\lambda r)\text{ for some }j,\\
D_i(z)&=O(\epsilon),\quad z\text{ lies in no ball in }\Fcal_i,
\end{split}
\end{equation}
and furthermore
\begin{equation}
\label{eq:6.6FF}
\|F(z)+G(z)\|-\|F(z)\|\ge O(\epsilon).
\end{equation}
Suppose $\|(F+G)(z)\|\le a-\epsilon^{\frac 1 7}$ for some  $z\in S$. Choose a ball
$B(z_{i,j},r)$ containing $z$. Then we have % by \eqref{eq:6.1FF} and \eqref{eq:6.6FF}
\[ \|F(z_{i,j})\|\stackrel{\eqref{eq:6.1FF}}{\le} \|F(z)\| + \epsilon\stackrel{\eqref{eq:6.6FF}}{\le}
  a-\epsilon^{\frac 1 7} + O(\epsilon)<a-\frac 12\epsilon^{\frac 1 7} , \]
for any $\epsilon\in (0,\epsilon_0)$, if $\epsilon_0>0$ is chosen small enough. Therefore, since $a\ge \frac 12$ we get $a^2-\|F(z_{i,j})\|^2\ge \frac 14 \epsilon^{\frac 1 7}$, which 
implies by \eqref{eq:6.26.3FF} that $|\beta_{i,j}|^2+|\beta_{i+s,j}|^2\ge \frac 1 {8s}\epsilon^{\frac 1 7}$.
By Lemma \ref{Lemma5.3FF} (c), we obtain $|\phi_{i,j}(z)|^2\ge C^2  \eta^{\frac 1 8}$, which by \eqref{eq:6.5FF}
leads to
\[\|F(z)+G(z)\|^2-\|F(z)\|^2=\sum_{i=1}^s D_i(z)\ge \frac{C^2}{8s} \eta^{\frac 1 8}\epsilon^{\frac 1 7} + O(\epsilon)
\ge 2 \epsilon^{\frac 2 7},
\]
for any $\epsilon\in (0,\epsilon_0)$, if $\epsilon_0>0$ is chosen small enough.
Then we get 
\[  \|F(z)+G(z)\|-\|F(z)\|=\frac{\|F(z)+G(z)\|^2-\|F(z)\|^2}{\|F(z)+G(z)\|+\|F(z)\|}\ge  \epsilon^{\frac 2 7} ,     \]
which proves (b).

Property (iv) implies that there exists a compact set $L\subset D$ such that 
\begin{equation}
\label{eq:choiceR}
\dist_F(p,bL)>\sigma.
\end{equation}

By enlarging $K$ if necessary, we may assume that $L\Subset \mathring K$.
The part (c) and (d) are proved exactly as in \cite{FF}, and the constant $m$ has to be chosen large enough.
Moreover, $\|G(z)\|$ can be made arbitrarily small for all $z\in K$.
Furthermore, by taking $m$ even larger if necessary, we can assume that $r>0$ is so small that 
\[  L\cap \b(z_{i,j},\lambda r)=\emptyset, \quad \text{for all } 1\le i\le s,\ 1\le j\le N_i.  \]
Since uniform approximation of $F$ on the compact set $K$ implies $\ccal^1$-approximation
of $F$ on the relatively compact subset $L$ 
%and since we can make $\|G(z)\|$ as small as we wish on $K$
we get from \eqref{eq:choiceR} that
\begin{equation}\label{oc:22}
\dist_{F+G}(p,bL)>\sigma,
\end{equation}
if  $\|G(z)\|$ is small enough for all $z\in K$.

To prove (e), we consider $\dist_{F+G}(bL,S)$.
Choose any path $\gamma$ in $\cd$ from $S$ to $bL$. Denote its starting point by $q_1\in S$ and 
its ending point by $q_2\in bL$. Choose a ball $\b(z_{i,j},r)$ containing $q_1$. Since $L\cap \b(z_{i,j},\lambda r)=\emptyset$
the path $\gamma$ intersects $b\b(z_{i,j},\lambda r)$; let $q_3$ denote any intersection point.
We have
\begin{equation} 
 \begin{split} \label{oc:1}
   &\length((F+G)(\gamma))\ge\|(F+G)(q_1)-(F+G)(q_3)\|\\
   &\ge \sqrt{|(f_i+g_i)(q_1)-(f_i+g_i)(q_3)|^2+|(f_{i+s}+g_{i+s})(q_1)-(f_{i+s}+g_{i+s})(q_3)|^2}.
  \end{split}
\end{equation}
Note that $|a+b|^2\ge |a|^2-2|b|$ for each $a, b\in\c$ such that $|a|\le 1$.
By Lemma \ref{Lemma5.3FF} and \eqref{eq:6.1FF} we get
\begin{equation*} 
 \begin{split}
  & |(f_i+g_i)(q_1)-(f_i+g_i)(q_3)|^2\\
&=|\beta_{i,j}\phi_{i,j}(q_1)+(g_i(q_1)-\beta_{i,j}\phi_{i,j}(q_1))+(f_i(q_1)-f_i(q_3))
\\
&\quad-\beta_{i,j}\phi_{i,j}(q_3)-(g_i(q_3)-\beta_{i,j}\phi_{i,j}(q_3))|^2\\
&\ge C^2|\beta_{i,j}|^2\eta^{\frac 18}-2(3\eta+\eta^{\frac 23}),
  \end{split}
\end{equation*}
and similar for the index $i+s$ instead of $i$. Therefore by \eqref{oc:1}, \eqref{eq:6.26.3FF} and (iii) we get
\begin{equation} 
 \begin{split} \label{oc:3}
   \length((F+G)(\gamma))\ge& 
   \sqrt{ C^2(|\beta_{i,j}|^2+|\beta_{i+s,j}|^2)\eta^{\frac 18}-4(3\eta+\eta^{\frac 23})}\\
   \ge &\sqrt{C^2(2a\epsilon^{\frac 12}-\epsilon)\eta^{\frac 18}+O(\eta^{\frac 23})}\\
   \ge&\sqrt{2C_1\eta^{\frac 58}+O(\eta^{\frac 23})}\ge\sqrt{C_1}\eta^{\frac5{16}}=E\epsilon^{\frac5{16}},
 \end{split}
\end{equation}
for any $\epsilon\in (0,\epsilon_0)$, provided $\epsilon_0>0$ is small enough; the constants $C_1$ and $E$ depend only on $\epsilon_0$.
Since \eqref{oc:3} holds for any path from $bL$ to $S$ we have $\dist_{F+G}(bL,S)\ge E\epsilon^{\frac5{16}}$, and
by \eqref{oc:22}  we get $\dist_{F+G}(p,S)\ge \sigma + E\epsilon^{\frac5{16}}$, which proves (e).
\end{proof}

\textit{Proof of Theorem \ref{main_theorem}.}
Let $h\colon\cd\to \b_p$ be as in the statement of Theorem \ref{main_theorem}. We shall construct the map
$F$ inductively in a way similar to the proof of \cite[Theorem 1.3]{FF}.

Choose an increasing sequence $\{a_k\}_{k\ge 1}$ converging to 1, and a decreasing sequence $\{\epsilon_k\}_{k\ge 1}$ converging to 0,
such that the following hold:
\begin{enumerate}[\rm(i)]
	\item $\max\{\sup_S\|h\|,\frac12\}<a_1-\epsilon_1^{\frac 12}$,
	\item $\displaystyle{\sum_{k=1}^\infty \epsilon_k^{\frac 12}<\infty},\ \displaystyle{\sum_{k=1}^\infty \epsilon_k^{\frac 5 {16}}=\infty}$,
	\item $a_k+\epsilon_k<a_{k+1}-\epsilon_{k+1}^{\frac 12}$ for all $k\ge 1$.
\end{enumerate}
We can obtain the two sequences as follows:
First we choose $a_1$, $\frac 12<a_1<1$, so close to $1$, and $\epsilon_1>0$ so close to 0 that (i) holds.
Then we choose a decreasing sequence $\{\epsilon_k\}$ converging to 0
such that 
\[
3\sum_{k=1}^\infty \epsilon_k^{\frac 12}=1-a_1,\quad {\sum_{k=1}^\infty \epsilon_k^{\frac 5 {16}}=\infty},
\]
which implies property (ii). The sequence 
\[
a_k=a_1+3\sum_{l=1}^{k-1} \epsilon_l^{\frac 12}+2\epsilon_k^{\frac 12},\quad k\ge 2,
\]
converges to 1 and satisfies (iii).

Let $F_0=(0,\ldots,0,h)$ and fix any $p\in D$. Since $h$ is nonconstant we have $\dist_{F_0}(p,S)>0$.
Let $s$ be the number provided by Lemma \ref{Lemma5.2FF} for  $\lambda$ defined by \eqref{eq:lambda}.
Using Lemma \ref{Lemma6.1FF} we will inductively construct a sequence of entire maps $\{G_j\colon \c^n\to\c^{2s+p}\}_{j\ge 1}$,
a sequence of injective holomorphic immersions  $F_k=F_0+\sum_{j=1}^k G_j$,
two increasing sequences of compact subsets $\{K_k\}_{k\ge 1}$, $\{L_k\}_{k\ge 1}$ of $D$ such that 
\[L_k\Subset \mathring K_k,\quad \text{and }\quad \bigcup_{k=1}^\infty K_k=\bigcup_{k=1}^\infty L_k=D,\]
a decreasing sequence $\{\delta_k\}_{k\ge 1}$ converging to 0, $0<\delta_k<\epsilon_k$, 
such that for every $k\ge 1$ the following properties hold: 
\begin{enumerate}[\rm(a)]
	\item $\|F_{k-1}(z)\|\ge \min_{w\in S}\|F_{k-1}(w)\|-\frac1{2^{k}}$ for each $z\in\cd \setminus K_k$,
	\item $\|F_k(z)\|\le a_k+\epsilon_k$ for each $z\in \cd$,
	\item if $\|F_k(z)\|\le a_{k}-\epsilon_k^{\frac 1 7}$ for some  $z\in S$, then  
	$\|F_k(z)\|>\|F_{k-1}(z)\|+\epsilon_k^{\frac 2 7}$,
	\item $\|G_k(z)\|<\frac{\delta_k}{2^k}$ for each $z\in K_k$,
	\item $\|G_k(z)\|^2<1-\min_{w\in S}\|F_k(w)\|$ for all $z\in \cd$,
  \item $\displaystyle{\dist_{F_{k-1}}(p,bL_{k})>\frac12\dist_{F_{0}}(p,bD)+E\sum_{j=1}^{k-1}\epsilon_j^{\frac 5{16}}}$,
	\item if $F\colon D\to \c^{2s+p}$ is holomorphic and $\|F(z)-F_{k-1}(z)\|<\delta_{k}$ for all $z\in K_{k}$,
	then $\dist_{F}(p,bL_{k})>\dist_{F_{k-1}}(p,bL_{k})-1$.
%	\item $\displaystyle{\dist_{F_{k-1}}(p,bL_{l+1})>\frac12\dist_{F_{0}}(p,bD)+E\sum_{j=1}^{l-1}\epsilon_j^{\frac 5{16}}}$ for all $l$, $0\le l\le k-1$.
\end{enumerate}

First choose $L_1$ such that (f) holds for $k=1$, then choose $K_1$, $L_1\Subset \mathring K_1$, such that (a) holds for $k=1$.
Since uniform approximation of $F_0$ on the compact set $K_1$ implies $\ccal^1$-approximation
of $F_0$ on the relatively compact subset $L_1$, there is
$\delta_1$, $0<\delta_1<\epsilon_1$, satisfying (g).
We apply Lemma \ref{Lemma6.1FF} to $F_0$, $a_1$, $\epsilon_1$, $\delta_1/2$ to obtain an entire map
$G_1$, which satisfies properties (b)-(e), and $\dist_{F_1}(p,bD)>\frac12\dist_{F_{0}}(p,bD)+E\epsilon_1^{\frac 5{16}}$.
We proceed similarly, taking (iii) into account and we obtain sequences $G_k$, $K_k$, $L_k$ and $\delta_k$, which satisfy (a)-(g).

Property (d) implies that the sequence $F_k$ converges uniformly on compact sets in $D$ to a holomorphic map $F\colon D\to \c^{2s+p}$
and we get the estimate
\begin{equation} 
\label{eq:proof1}
\begin{split}
\|F_{k-1}(z)-F(z)\|&\le \|F_{k-1}(z)-F_{k}(z)\|+\|F_{k}(z)-F_{k+1}(z)\|+\cdots\\
&\le \frac{\delta_{k}}{2^{k}}+\frac{\delta_{k+1}}{2^{k+1}}+\cdots\le \delta_{k}\quad \text{for every }\quad z\in K_{k}
\end{split}
\end{equation}
This implies together with (f) and (g) that 
\[{\dist_{F}(p,bL_{k})>\frac12\dist_{F_{0}}(p,bD)+E\sum_{j=1}^{k-1}\epsilon_j^{\frac 5{16}}}-1.\]
By (ii) the series 
$\sum_{j}\epsilon_j^{\frac 5{16}}$ 
 diverges, which implies that the map $F$ is  complete.
Property (b) and the maximum principle imply that $F(D)\subset \b_{2s+p}$.
Since the map $h$ is an injective immersion on $D$, $F_0=(0,\ldots,0,h)$ and all last $p$ components of the maps $G_k$ are zero
for each $k$, all the maps $F_k$ and the limit map $F$ are injective immersions.
The fact that the map $F$ is proper is proved as in \cite{FF},
where we take into account that the series
$\sum_j\epsilon_j^{\frac27}$ is divergent by (ii).
This completes the proof of Theorem \ref{main_theorem}.\hfill \qed
%%%%%%%%%%
%%%%%%%%%%
%%%%%%%%%%
%%%%%%%%%%
%%%%%%%%%%
%%%%%%%%%%

\subsection*{Acknowledgements}

The author is partially  supported  by the research program P1-0291 and the grant J1-5432 from ARRS, Republic of Slovenia.

Part of this work was done  while the author was visiting University of Oslo. She would like 
to thank the Department of Mathematics for the hospitality and partial financial support.
%%%%%%%%%%
%%%%%%%%%%
%%%%%%%%%%
%%%%%%%%%%
%%%%%%%%%%
%%%%%%%%%%

%%%%%%%%%%
%%%%%%%%%%
%%%%%%%%%%
%%%%%%%%%%
%%%%%%%%%%
%%%%%%%%%%

\vskip 0.4cm

\noindent Faculty of Mathematics and Physics, University of Ljubljana, and Institute
of Mathematics, Physics and Mechanics, Jadranska 19, SI--1000 Ljubljana, Slovenia.

\noindent e-mail: {\tt barbara.drinovec@fmf.uni-lj.si}


\begin{thebibliography}{12}

\bibitem{AF}
Alarc\'on, A.;  Forstneri\v c, F.: 
Every bordered Riemann surface is a complete proper curve in a ball. 
Math.\ Ann. {\bf 357} (2013) 1049--1070

\bibitem{AF1}
Alarc\'on, A.;  Forstneri\v c, F.: 
Null holomorphic curves in $\c^3$ and applications to the
conformal Calabi-Yau problem.
\texttt{arXiv:1311.1985}

\bibitem{Ca}
Calabi, E.: Problems in differential geometry. Ed. S. Kobayashi and J. Eells, Jr., Proceedings of the United
States-Japan Seminar in Differential Geometry, Kyoto, Japan, 1965. Nippon Hyoronsha Co., Ltd., Tokyo
(1966) 170

\bibitem{Fo}
Fornaess, J.E.:
Embedding Strictly Pseudoconvex Domains in Convex Domains.
Amer.\ J.\ Math. {\bf 98} (1976) 529--569  

\bibitem{FF}
Forstneri\v c, F.:
%\textit
{Embedding strictly pseudoconvex domains into balls}.
Trans.\ Amer.\ Math.\ Soc. {\bf295} (1986) 347--368

\bibitem{FW}
Forstneri\v c, F.; Wold, E.F.: 
Bordered Riemann surfaces in $\c^2$. 
J.\ Math.\ Pures Appl. (9) \textbf{91} (2009) 100--114 


\bibitem{Gl}
Globevnik, J.:
A complete complex hypersurface in the ball of $\c^N$.
\texttt{arXiv:1401.3135}

\bibitem{Gl2}
Globevnik, J.:
Boundary continuity of complete proper holomorphic maps. 
J.\ Math.\ Anal.\ Appl.\ \textbf{424} (2015) 824--825 

\bibitem{HS}
Hakim, M.;  Sibony N.:
{Fonctions holomorphes born\'ees sur la boule unit\'e de $\c^n$}.
Invent.\ Math. {\bf{67}} (1982) 213--222

\bibitem{Jo}
Jones, P. W.: 
A complete bounded complex submanifold of $\c^3$.
Proc.\ Amer.\ Math.\ Soc. {\bf {76}} (1979) 305--306

\bibitem{JX}
Jorge, L.P.; Xavier, F.: 
A complete minimal surface in $\r^3$ between two parallel planes.
Ann.\ of Math. {\bf{112}} (1980) 203–-206


\bibitem{Lo1}
L\o w E.:
{A construction of inner functions on the unit ball in $\c^p$}.
Invent.\ Math. {\bf{67}} (1982) 223--229


\bibitem{Lo2}
L\o w E.:
{Inner functions and boundary values in $H^\infty(\Omega)$ and $A (\Omega)$ in smoothly bounded
pseudoconvex domains}.
Math.\ Z. {\bf{185}} (1984) 191--210  


\bibitem{Lo3}
L\o w E.:
{Embeddings and proper holomorphic maps of strictly pseudoconvex domains into
polydiscs and balls,}.
Math.\ Z. {\bf{190}} (1985) 401--410

\bibitem{Lo4}
L\o w E.:
The ball in $\c^n$ is a closed complex submanifold of a polydisc.
Invent.\ Math. {\bf{83}} (1986) 405--410


\bibitem{Yo1}
Yang, P.:
{Curvatures of complex submanifolds of {${\c}^{n}$}}.
J.\ Differential Geom. {\bf 12} (1978) 499--511 

\bibitem{Yo2}
Yang, P.:
Curvature of complex submanifolds of {$\c^{n}$}.
Several complex variables ({P}roc.\ {S}ympos.\ {P}ure {M}ath.,
              {V}ol. {XXX}, {P}art 2, {W}illiams {C}oll., {W}illiamstown,
              {M}ass., 1975), {135--137}, {Amer. Math. Soc., Providence, R.I.} (1977)

\end{thebibliography}
\end{document}